\documentclass{CSML}
\pdfoutput=1

\usepackage{lastpage}

\lmcsheading{}{1--\pageref{LastPage}}{}{}%
{Nov.~29, 2016}{Sep.~01, 2017}{}

\usepackage{hyperref}
\hypersetup{hidelinks}
\usepackage{amsmath}
\usepackage{amsfonts}
\usepackage{amsthm}
\usepackage{graphicx}
\usepackage{eucal}
\usepackage{amscd}
\usepackage[all,2cell]{xy}
\UseAllTwocells
\usepackage{amssymb}
\usepackage{mathrsfs}

\newdir{ |>}{{}*!/-3.5pt/@{|}*!/-8pt/:(1,-.2)@^{>}*!/-8pt/:(1,+.2)@_{>}}
\newtheoremstyle{defC}%
  {6pt}
  {6pt}
  {\normalfont}
  {}
  {\bfseries}
  {{\bfseries .}}
  {5pt plus 1pt minus 1pt}
  {\thmname{#1} \thmnumber{#2} \thmnote{\normalfont#3}}
\theoremstyle{defC}
\newtheorem{defiC}[thm]{Definition}

\newtheoremstyle{thmC}%
  {6pt}
  {6pt}
  {\itshape}
  {}
  {\bfseries}
  {{\bfseries .}}
  {5pt plus 1pt minus 1pt}
  {\thmname{#1} \thmnumber{#2} \thmnote{\normalfont#3}}

\newtheorem{thmC}[thm]{Theorem}
\newtheorem{propC}[thm]{Proposition}
  
\begin{document}

\title[Categorical-algebraic conditions in $S$-protomodular categories]{On some categorical-algebraic conditions in $S$-protomodular categories}

\author{Nelson Martins-Ferreira}
\address{ESTG, CDRSP, Instituto Polit\'{e}cnico de Leiria,
Leiria, Portugal}
\thanks{}
\email{martins.ferreira@ipleiria.pt}

\author{Andrea Montoli}
\address{Dipartimento di Matematica ``Federigo Enriques'', Universit\`{a} degli Studi di Milano, Italy}
\thanks{} \email{andrea.montoli@unimi.it}

\author{Manuela Sobral}
\address{CMUC and Departamento de
Matem\'atica, Universidade de Coimbra, 3001--501 Coimbra,
Portugal} \email{sobral@mat.uc.pt}

\keywords{$S$-protomodular categories, monoids, monoids with
operations, J\'{o}nsson-Tarski varieties, algebraic cartesian
closedness, algebraic coherence}

\subjclass[2010]{18G50, 03C05, 08C05, 18D35}

\dedicatory{Dedicated to Ji\v{r}\'\i~Ad\'amek}

\begin{abstract}
In the context of protomodular categories, several additional
conditions have been considered in order to obtain a closer
group-like behavior. Among them are locally algebraic cartesian
closedness and algebraic coherence. The recent notion of
$S$-protomodular category, whose main examples are the category of
monoids and, more generally, categories of monoids with operations
and J\'{o}nsson-Tarski varieties, raises a similar question: how
to get a description of $S$-protomodular categories with a strong
monoid-like behavior. In this paper we consider relative versions
of the conditions mentioned above, in order to exhibit the
parallelism with the ``absolute'' protomodular context and to
obtain a hierarchy among $S$-protomodular categories.
\end{abstract}

\maketitle

\section{Introduction}

Semi-abelian categories \cite{JMT} have been introduced in order
to give a categorical description of group-like algebraic
structures, as well as abelian categories describe abelian groups
and modules over commutative rings. However, the family of
semi-abelian categories revealed to be too large for this purpose,
since, together with algebraic structures like groups, rings, Lie
algebras, $\Omega$-groups in the sense of \cite{Higgins}, it
contains many other examples, like the duals of the categories of
pointed objects in a topos \cite{Bourn topos protomod} (in
particular, the dual of the
category of pointed sets is semi-abelian).

In order to get closer to group-like structures, several
conditions have been asked for a category, in addition to the
condition of being semi-abelian. A well studied one, which has
several important consequences in commutator theory and in the
description of internal structures, is the so-called ``Smith is
Huq'' condition \cite{MartinsVdL SH1}: two internal equivalence
relations on the same object centralize each other in the sense of
Smith-Pedicchio \cite{Smith, Pedicchio} if and only if their
normalizations commute in the sense of Huq \cite{Huq}. An example,
due to G. Janelidze, of a semi-abelian category (actually, a
semi-abelian variety in the sense of universal algebra) which
doesn't satisfy this condition is the category of digroups. As
shown in \cite{MantovaniMetere}, every category of \textit{groups
with operations} in the sense of Porter \cite{Porter} (see also
\cite{Orzech}, where the axioms for groups with operations have
been first considered, without giving a name to such structures)
is a semi-abelian category which satisfies the ``Smith is Huq''
condition. A characterization of the ``Smith is Huq'' condition in
terms of the so-called \textit{fibration of points} (see Section
\ref{preliminaries} for a description of this fibration) has been
given in \cite{BournMartinsVdL}: the change-of-base functors of
the fibration of points reflect the commutation of normal
subobjects.

Other conditions have been introduced more recently, by
constructing a parallelism with topos theory. We recall, among
them, locally algebraic cartesian closedness \cite{Gray,
BournGray}, fibrewise algebraic cartesian closedness
\cite{BournGray} and algebraic coherence \cite{CigoliGrayVdL}.
These conditions are obtained from the classical ones of locally
cartesian closedness, fibrewise cartesian closedness and coherence
by replacing, in their definition, the basic fibration with the
fibration of points. Actually, these additional conditions are
meaningful not only for semi-abelian categories, but also in more
general contexts, like pointed protomodular \cite{Bourn protomod}
categories. See Section \ref{preliminaries} for more details.
Every \textit{category of interest} in the sense of Orzech
\cite{Orzech} is algebraically coherent \cite[Theorem
4.15]{CigoliGrayVdL} and fibrewise algebraically cartesian closed
(this is a consequence of \cite[Theorem 6.27]{CigoliGrayVdL}).
Much less are the examples of locally algebraically cartesian
closed categories: the main ones are the categories of groups and
of Lie algebras over a commutative ring. Hence these conditions
create a hierarchy among semi-abelian and protomodular categories:
to a stronger condition corresponds a smaller family of examples,
closer to the main example, the category of groups.

An important property of semi-abelian categories is the fact that
internal actions (defined as in \cite{BJKinternalobjact}) are
equivalent to split extensions \cite[Theorem 3.4]{BJsemidir}. In
the category of groups, internal actions correspond to classical
group actions: an action of a group $B$ on a group $X$ is a group
homomorphism $B \to \text{Aut}(X)$. In the category of monoids,
which is not semi-abelian, the equivalence mentioned above does
not hold. Classical monoid actions are defined as monoid
homomorphisms $B \to \text{End}(X)$, where $\text{End}(X)$ is the
monoid of endomorphisms of the monoid $X$. Looking for a class of
split extensions that are equivalent to such actions in the
category of monoids, in \cite{MartinsMontoliSobral mon w op} we
identified a particular kind of split epimorphisms, that we called
\textit{Schreier split epimorphisms} (the name is inspired by the
work of Patchkoria on Schreier internal categories
\cite{Patchkoria}). A similar equivalence between actions and
Schreier split epimorphisms holds also for semirings, and, more
generally, for every category of \text{monoids with operations}
\cite{MartinsMontoliSobral mon w op}.

Further investigations on the class of Schreier split epimorphisms
in monoids, semirings and monoids with operations \cite{BMMS
SemForum, Schreier book} allowed to discover that this class
satisfies several properties that are typically satisfied by the
class of all split epimorphisms in a protomodular category, like
for example the Split Short Five Lemma, which is a key ingredient
in the definition of semi-abelian categories (in a pointed
finitely complete context, it is equivalent to protomodularity
\cite{Bourn protomod}). In order to describe this situation
categorically, the notion of pointed \textit{$S$-protomodular
category}, relatively to a suitable class $S$ of points (i.e.
split epimorphisms with a fixed section) has been introduced in
\cite{BMMS S-protomodular} (see Section \ref{S-protomodular
categories} for more details). In \cite{BMMS S-protomodular} it is
shown that $S$-protomodular categories satisfy, relatively to the
class $S$, many properties of protomodular categories. In
\cite{MartinsMontoli S-SH} it is proved that every
J\'{o}nsson-Tarski variety \cite{JonssonTarski} is
$S$-protomodular w.r.t. the class of Schreier split epimorphisms.
This is the case, in particular, for monoids with
operations, as already observed in \cite{BMMS S-protomodular}.

The aim of this paper is to study additional conditions on an
$S$-protomodular category, similarly to what has been done for
protomodular categories, in order to create a hierarchy among them
which allows to get closer to a categorical description of the
category of monoids, which is the central example of an
$S$-protomodular category. A relative version of the ``Smith is
Huq'' condition was already studied in \cite{MartinsMontoli S-SH}:
two $S$-equivalence relations (i.e. equivalence relations such
that the two projections, with the reflexivity morphism, form
points belonging to $S$, see \cite{BMMS S-protomodular}) on the
same object centralize each other if and only if their
normalizations commute. In \cite{MartinsMontoli S-SH} it was shown
that every category of monoids with operations satisfies this
relative version of the ``Smith is Huq'' condition. This already
permits to distinguish monoids with operations among
J\'{o}nsson-Tarski
varieties.

Now our aim is to consider relative versions of the other
additional conditions we mentioned, namely locally algebraic
cartesian closedness, fibrewise algebraic cartesian closedness and
algebraic coherence. In order to do that, we replace the fibration
of points with its subfibration of points belonging to the class
$S$, which is supposed to be stable under pullbacks (this
assumption is necessary to get a subfibration). We show that the
category of monoids is locally algebraically cartesian closed
w.r.t. the class $S$ of Schreier points, while this property fails
for semirings. Furthermore, the categories of monoids and
semirings are fibrewise algebraically cartesian closed and
algebraically coherent, relatively to $S$, while these two
properties fail, in general, for monoids with operations. Hence we
get the hierarchy we were looking for.

\section{Semi-abelian categories and additional conditions on
them} \label{preliminaries}

A semi-abelian category \cite{JMT} is a pointed, Barr-exact
\cite{BarrExact}, protomodular \cite{Bourn protomod} category with
finite coproducts. Every variety of $\Omega$-groups in the sense
of Higgins \cite{Higgins} is a semi-abelian category. More
generally, semi-abelian varieties of universal algebras have been
characterized in \cite{BJ semiab varieties}. Non-varietal examples
of semi-abelian categories are the category of compact Hausdorff
groups and the dual of every category of pointed objects
in a topos \cite{Bourn topos protomod}.

The condition of protomodularity can be expressed in terms of a
property of the so-called \textit{fibration of points}. A
\textit{point} in a category $\mathcal{C}$ is a $4$-tuple $(A, B,
f, s)$, where $f \colon A \to B$, $s \colon B \to A$ and $fs =
1_B$. In other terms, a point is a split epimorphism with a chosen
splitting. A morphism between a point $(A, B, f, s)$ and a point
$(A', B', f', s')$ is a pair $(g, h)$ of morphisms such that the
two reasonable squares in the following diagram commute:
\[ \xymatrix{ A \ar[d]_g \ar@<-2pt>[r]_f & B \ar[d]^h
\ar@<-2pt>[l]_s \\
A' \ar@<-2pt>[r]_{f'} & B', \ar@<-2pt>[l]_{s'} } \] i.e. $hf = f'
g$ and $gs = s' h$. There is a functor
\[ \text{cod} \colon Pt(\mathcal{C}) \to \mathcal{C} \]
which associates its codomain with every point: $\text{cod}(A, B,
f, s) = B$. If $\mathcal{C}$ has pullbacks, this functor is a
fibration, called the \textit{fibration of points}. A finitely
complete category $\mathcal{C}$ is protomodular if every
change-of-base functor of the fibration of points is conservative.
If $\mathcal{C}$ is pointed, protomodularity is equivalent
to the fact that the Split Short Five Lemma holds.

We now recall the definitions of the additional conditions we are
interested in.

\begin{defiC}[\cite{Gray, BournGray}] \label{lacc category}
A finitely complete category $\mathcal{C}$ is \emph{locally
algebraically cartesian closed}, or \emph{LACC}, if, for every
morphism $h \colon E \to B$, the corresponding change-of-base
functor of the fibration of points:
\[ h^* \colon Pt_B(\mathcal{C}) \to Pt_E(\mathcal{C}) \] has a
right adjoint.
\end{defiC}

As shown in \cite{Gray}, the categories of groups and of Lie
algebras over a commutative ring are LACC.

\begin{defiC}[\cite{BournGray}] \label{fwacc category}
A finitely complete category $\mathcal{C}$ is \emph{fibrewise
algebraically cartesian closed}, or \emph{FWACC}, if, for every
split epimorphism $h \colon E \to B$, the corresponding
change-of-base functor of the fibration of points has a right
adjoint.
\end{defiC}

Obviously every LACC category is FWACC. As shown in
\cite[Propositions 6.7 and 6.8]{Gray}, the category of
(non-unitary) rings is a
FWACC category which is not LACC.

We recall that, in a finitely complete category, a pair $(f, g)$
of morphisms with the same codomain is \emph{jointly strongly
epimorphic} if, whenever $f$ and $g$ factor through a monomorphism
$m$, $m$ is an isomorphism.

\begin{defiC}[\cite{CigoliGrayVdL}] \label{alg coherent category}
A finitely complete category $\mathcal{C}$ is \emph{algebraically
coherent} if, for every morphism $h \colon E \to B$, the
corresponding change-of-base functor of the fibration of points
preserves jointly strongly epimorphic pairs.
\end{defiC}

It is immediate to see that a finitely cocomplete LACC category is
algebraically coherent \cite[Theorem 4.5]{CigoliGrayVdL}. Any
semi-abelian algebraically coherent variety is FWACC \cite[Theorem
6.27]{CigoliGrayVdL}. Every \emph{category of interest} in the
sense of Orzech \cite{Orzech} is algebraically coherent
\cite[Theorem 4.15]{CigoliGrayVdL}, and hence FWACC. This is not
the case for every group with operations in the sense of Porter:
for example, the category of non-associative rings is not
algebraically coherent \cite[Examples 4.10]{CigoliGrayVdL}.

\section{$S$-protomodular categories} \label{S-protomodular
categories}

Given a finitely complete category $\mathcal{C}$, let $S$ be a
class of points in $\mathcal{C}$ which is stable under pullbacks
along any morphism. Denoting by $SPt(\mathcal{C})$ the full
subcategory of $Pt(\mathcal{C})$ whose objects are the points in
$S$, we obtain a subfibration of the fibration of points:
\[ S\text{-cod} \colon SPt(\mathcal{C}) \to \mathcal{C}. \]

A point $(A, B, f, s)$ is called a \emph{strong point} \cite{BMMS
S-protomodular} (or \emph{regular point}, as in
\cite{MartinsMontoliSobral ssfl normal}, in a regular context) if
the morphisms $k$ and $s$, where $k$ is a kernel of $f$, are
jointly strongly epimorphic.

\begin{defiC}[\hspace{-1pt}\cite{BMMS S-protomodular}] \label{S-protomodular category}
\hspace{-1pt}A pointed, finitely complete category $\mathcal{C}$ is said to be
\emph{$S$-protomodular} if:
\begin{enumerate}
\item every point in $S$ is a strong point;

\item $SPt(\mathcal{C})$ is closed under finite limits in
$Pt(\mathcal{C})$.
\end{enumerate}
\end{defiC}

The following result implies that the Split Short Five Lemma holds
for points in $S$ in an $S$-protomodular category:

\begin{thmC}[{\cite[Theorem 3.2]{BMMS S-protomodular}}] \label{change of base conservative}
In an $S$-protomodular category, every change-of-base functor of
the subfibration $S$-cod is conservative.
\end{thmC}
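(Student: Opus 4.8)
The plan is to reduce the statement to the conservativity of a ``kernel functor'' on each fibre, and then to extract that conservativity from the strong-point axiom, using that a kernel together with the associated section is jointly strongly epimorphic.

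First I would fix a morphism $h\colon E\to B$ and consider the change-of-base functor $h^{*}\colon SPt_{B}(\mathcal{C})\to SPt_{E}(\mathcal{C})$, which is well defined because $S$ is stable under pullbacks. Let $\mathrm{Ker}_{B}\colon SPt_{B}(\mathcal{C})\to\mathcal{C}$ send a point $(A,B,f,s)$ to a kernel of $f$, and a morphism of points over $B$ to its restriction to kernels. A pasting of pullbacks along $0\to E\to B$ shows that a kernel of the pulled-back point $(E\times_{B}A,\,E,\,\bar f,\,\bar s)$ is canonically a kernel of $f$, naturally in the point: $\mathrm{Ker}_{E}\circ h^{*}\cong\mathrm{Ker}_{B}$. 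Hence, as soon as each $\mathrm{Ker}_{B}$ is known to be conservative, a morphism $\phi$ of $SPt_{B}(\mathcal{C})$ with $h^{*}\phi$ invertible has $\mathrm{Ker}_{B}(\phi)\cong\mathrm{Ker}_{E}(h^{*}\phi)$ invertible, and therefore $\phi$ is invertible. So it is enough to prove that $\mathrm{Ker}_{B}$ is conservative for every $B$.

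To this end, let $\phi=(u,1_{B})\colon(A,B,f,s)\to(A',B,f',s')$ be a morphism in $SPt_{B}(\mathcal{C})$, with kernels $k\colon K\to A$ and $k'\colon K'\to A'$ and induced map $v\colon K\to K'$ (so $uk=k'v$ and $us=s'$), and assume $v$ is an isomorphism. I would show $u$ is both a strong epimorphism and a monomorphism. If $u=me$ with $m\colon M\to A'$ a monomorphism, then $s'=m(es)$ and $k'=m(ekv^{-1})$ both factor through $m$; since $(A',B,f',s')$ is in $S$ it is a strong point, so $(k',s')$ is jointly strongly epimorphic and $m$ is an isomorphism --- thus $u$ is a strong epimorphism. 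For monicity, form the kernel pair $A\times_{A'}A$ of $u$; equipped with $g\colon A\times_{A'}A\to B$ given by $f$ on either projection (the two composites agree since $f=f'u$) and with the section $\langle s,s\rangle$, it is the pullback of $\phi$ along itself in $Pt(\mathcal{C})$, hence lies in $SPt(\mathcal{C})$ by closure under finite limits and is a strong point. Using that $v$ is an isomorphism one checks that $\langle k,k\rangle\colon K\to A\times_{A'}A$ is a kernel of $g$. The diagonal $\Delta\colon A\to A\times_{A'}A$ is a (split) monomorphism through which both $\langle k,k\rangle=\Delta k$ and $\langle s,s\rangle=\Delta s$ factor; strongness of the point with total object $A\times_{A'}A$ then forces $\Delta$ to be an isomorphism, i.e.\ the two projections of the kernel pair coincide, so $u$ is a monomorphism. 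Finally, a morphism that is at once a strong epimorphism and a monomorphism is an isomorphism, so $u$, and hence $\phi$, is invertible.

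I expect the monicity step to be the crux: it is there that the second axiom of $S$-protomodularity is genuinely used, by recognizing the kernel pair of $\phi$ as a finite limit of points of $S$ --- so that it too is a strong point --- and by computing its kernel precisely enough to pin down the diagonal via joint strong epimorphy. The reduction to $\mathrm{Ker}_{B}$ and the strong-epimorphism step are comparatively routine diagram chases with pullbacks.
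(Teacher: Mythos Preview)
The paper does not itself prove this theorem: it is quoted as Theorem~3.2 of \cite{BMMS S-protomodular} and no argument is given here. So there is no ``paper's own proof'' to compare against.

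That said, your argument is sound and is in fact very close to the original one in \cite{BMMS S-protomodular}. The reduction to conservativity of $\mathrm{Ker}_B$ via $\mathrm{Ker}_E\circ h^{*}\cong\mathrm{Ker}_B$ is correct, the strong-epimorphism half is a direct use of axiom~(1), and the monicity half is the interesting part: recognising the kernel pair of $u$ as the pullback $\phi\times_{\phi}\phi$ in $Pt(\mathcal{C})$, invoking axiom~(2) to place it in $S$, and then using that $v$ is an isomorphism to identify its kernel with $\langle k,k\rangle$ so that the diagonal absorbs both kernel and section. Each of these verifications goes through exactly as you indicate. One small remark: when you say ``the pullback of $\phi$ along itself in $Pt(\mathcal{C})$'', it may be worth spelling out that finite limits in $Pt(\mathcal{C})$ are computed componentwise (so the base is $B\times_B B\cong B$ and the total object is $A\times_{A'}A$), since that is what makes the appeal to axiom~(2) legitimate.
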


Every protomodular category $\mathcal{C}$ is $S$-protomodular for
the class $S$ of all points in $\mathcal{C}$. In order to give
other, more meaningful examples, we recall the following:

\begin{defiC}[\cite{JonssonTarski}] \label{Jonsson-Tarski varieties}
A variety in the sense of universal algebra is a
\emph{J\'{o}nsson-Tarski variety} if the corresponding theory
contains a unique constant $0$ and a binary operation $+$
satisfying the equalities $0 + x = x + 0 = x$  for any $x$.
\end{defiC}

\begin{defiC}[\cite{MartinsMontoliSobral mon w op, MartinsMontoli S-SH}] \label{Schreier split epi}
A split epimorphism $\xymatrix{ A \ar@<-2pt>[r]_f & B
\ar@<-2pt>[l]_s }$ in a J\'{o}nsson-Tarski variety is said to be a
\emph{Schreier split epimorphism} when, for any $a \in A$, there
exists a unique $\alpha$ in the kernel $\text{Ker}(f)$ of $f$ such
that $a = \alpha + sf(a)$.
\end{defiC}

Equivalently, a split epimorphism $(A, B, f, s)$ as above is a
Schreier split epimorphism if there exists a unique map $q_f
\colon A \to Ker(f)$ (which is not a homomorphism, in general)
such that $a = q_f(a) + sf(a)$ for all $a \in A$. This map $q_f$
is called the \emph{Schreier retraction} of the split epimorphism
$(A, B, f, s)$.

\begin{propC}[{\cite[Proposition 2.5]{MartinsMontoli S-SH}}]
If $\mathbb{C}$ is a J\'{o}nsson-Tarski variety and $S$ is the
class of Schreier split epimorphisms, then $\mathbb{C}$ is an
$S$-protomodular category.
\end{propC}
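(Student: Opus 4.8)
The plan is to check the three things that make $\mathbb{C}$ into an $S$-protomodular category for $S$ the class of Schreier split epimorphisms: pullback stability of $S$ (needed even to form the subfibration $S\text{-cod}$), and the two clauses of Definition \ref{S-protomodular category}. Since $\mathbb{C}$ is a variety I would argue elementwise: the forgetful functor to $\mathbf{Set}$ preserves and reflects finite limits and monomorphisms, monomorphisms are exactly the injective homomorphisms, and a pair of morphisms into $A$ is jointly strongly epimorphic precisely when the subalgebra of $A$ generated by the union of their images is all of $A$.

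\emph{Pullback stability.} Given $(A,B,f,s)\in S$ with Schreier retraction $q_f$ and an arbitrary $h\colon C\to B$, form the pullback $P=A\times_B C$ with projections $p_1\colon P\to A$, $f'\colon P\to C$ and section $\bar s=\langle sh,1_C\rangle$. Its kernel is carried isomorphically onto $\mathrm{Ker}(f)$ by $p_1$, and since $f(a)=h(c)$ for $(a,c)\in P$ we get $(a,c)=(q_f(a),0)+\bar s(c)$ with $(q_f(a),0)\in\mathrm{Ker}(f')$; this is the unique such decomposition because its $A$-component one is unique. Hence $(P,C,f',\bar s)\in S$.

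\emph{Clause (1).} For $(A,B,f,s)\in S$ with kernel $k$, every $a\in A$ equals $q_f(a)+sf(a)$ with $q_f(a)\in k(\mathrm{Ker}(f))$ and $sf(a)\in s(B)$, so $a$ already lies in the subalgebra generated by $k(\mathrm{Ker}(f))\cup s(B)$. Thus $k$ and $s$ are jointly strongly epimorphic and every point of $S$ is a strong point.

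\emph{Clause (2)} is the crux. Finite limits in $Pt(\mathbb{C})$ are computed componentwise, so a finite limit of Schreier points $(A_i,B_i,f_i,s_i)$ has the form $(\lim A_i,\lim B_i,\lim f_i,\lim s_i)$ with kernel $\lim\mathrm{Ker}(f_i)$. The key lemma is the naturality of Schreier retractions: for a morphism of points $(g,h)\colon(A,B,f,s)\to(A',B',f',s')$ between Schreier split epimorphisms, $g\circ q_f=q_{f'}\circ g$; this follows by applying $g$ to $a=q_f(a)+sf(a)$, noting $g(q_f(a))\in\mathrm{Ker}(f')$ since $f'g(q_f(a))=hf(q_f(a))=0$, and invoking uniqueness of the Schreier decomposition in $A'$. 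Granting the lemma, for a compatible family $(a_i)\in\lim A_i$ the family $(q_{f_i}(a_i))$ is again compatible, hence defines an element of $\lim\mathrm{Ker}(f_i)$, and $(a_i)=(q_{f_i}(a_i))+(\lim s_i)(\lim f_i)(a_i)$, uniquely since uniqueness holds in each coordinate. Therefore the limit point is Schreier, $SPt(\mathbb{C})$ is closed under finite limits in $Pt(\mathbb{C})$, and $\mathbb{C}$ is $S$-protomodular. The only genuinely delicate point is the naturality lemma, together with the bookkeeping that turns the componentwise decompositions into a single Schreier decomposition in the limit; the rest is routine verification.
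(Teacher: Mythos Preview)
The paper does not actually prove this proposition: it is quoted verbatim from \cite[Proposition 2.5]{MartinsMontoli S-SH} and stated without proof, so there is no ``paper's own proof'' to compare against.

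That said, your argument is correct and is essentially the standard one. A couple of small remarks. First, your description of limits in $Pt(\mathbb{C})$ as ``computed componentwise'' is justified because $Pt(\mathbb{C})$ is the category of models in $\mathbb{C}$ of the walking split epimorphism, so limits are created by the two evaluation functors; it may be worth saying this explicitly. Second, the naturality lemma $g\circ q_f=q_{f'}\circ g$ is indeed the key step, and your proof of it is fine; note that it uses that $g$ is a homomorphism for the J\'{o}nsson--Tarski operation $+$, which is what makes the decomposition transport along $g$. Finally, your characterisation of jointly strongly epimorphic pairs in a variety (generating the codomain as a subalgebra) is correct and is precisely what makes Clause~(1) a one-line verification.
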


As shown in \cite{MartinsMontoliSobral mon w op} (see also Chapter
5 in \cite{Schreier book}), in the category of monoids Schreier
split epimorphisms are equivalent to monoid actions: an action of
a monoid $B$ on a monoid $X$ is a monoid homomorphism $\varphi
\colon B \to \text{End}(X)$, where $\text{End}(X)$ is the monoid
of endomorphisms of $X$. Given a Schreier split epimorphism
$\smash[b]{\xymatrix{ X \ar@<-2pt>[r]_k & A \ar@<-2pt>[r]_f
  \ar@<-2pt>@{.>}[l]_{q_f} & B \ar@<-2pt>[l]_s }}$
of monoids, the
corresponding action $\varphi$ is given by
\[ \varphi(b)(x )= q_f(s(b) + k(x)), \] where we use the additive
notation for the monoid operation; conversely, given an action of
$B$ on $X$, the corresponding Schreier split epimorphism is
obtained via a semidirect product construction. Actually, Schreier
split epimorphisms have been identified in order to get such an
equivalence. A similar equivalence with a suitable notion of
action holds in any category of \emph{monoids with operations}
\cite{MartinsMontoliSobral mon w op}, a family of varieties which
includes monoids, commutative monoids, semirings (i.e.~rings where
the additive structure is not necessarily a group, but just a
commutative monoid), join-semilattices with a bottom element,
distributive lattices
with a bottom element (or a top one).

The equivalence between actions and Schreier split epimorphisms
does not hold in any J\'{o}nsson-Tarski variety, that's why
originally in \cite{MartinsMontoliSobral mon w op} Schreier split
epimorphisms were only considered in monoids with operations. A
conceptual explanation of this phenomenon was given in
\cite{MartinsMontoli S-SH}: monoids with operations satisfy, with
respect to the class $S$ of Schreier split epimorphisms, a
relative version of the ``Smith is Huq'' condition: two
$S$-equivalence relations on the same object centralize each other
if and only if their normalizations commute. $S$-equivalence
relations (see \cite{BMMS S-protomodular}) are equivalence
relations such that the two projections, with the reflexivity
morphism, form points belonging to $S$. So, this condition allows
to distinguish monoids with operations from general
J\'{o}nsson-Tarski varieties. In order to get a more refined
classification, in the next section we consider relative versions
of the conditions on semi-abelian categories we recalled in
Section \ref{preliminaries}.

\section{Relative conditions on the fibration of points} \label{conditions for S-protomodular categories}

Throughout this section, let $\mathcal{C}$ be an $S$-protomodular
category, for a fixed class $S$ of points. By replacing the
fibration of points $\text{cod} \colon Pt(\mathcal{C}) \to
\mathcal{C}$ with its subfibration $S\text{-cod} \colon
SPt(\mathcal{C}) \to \mathcal{C}$ of points in $S$, we can
formulate relative versions of the conditions considered in
Section \ref{preliminaries}:

\begin{defi} \label{S-lacc category}
An $S$-protomodular category $\mathcal{C}$ is \emph{$S$-locally
algebraically cartesian closed}, or \emph{$S$-LACC}, if, for every
morphism $h \colon E \to B$, the corresponding change-of-base
functor of the fibration $S\text{-cod}$
\[ h^* \colon SPt_B(\mathcal{C}) \to SPt_E(\mathcal{C}) \] has a
right adjoint.
\end{defi}

\begin{defi} \label{S-fwacc category}
An $S$-protomodular category $\mathcal{C}$ is \emph{$S$-fibrewise
algebraically cartesian closed}, or \emph{$S$-FWACC}, if, for
every split epimorphism $h \colon E \to B$, the corresponding
change-of-base functor of the fibration $S\text{-cod}$ has a right
adjoint.
\end{defi}

\begin{defi} \label{S-alg coherent category}
An $S$-protomodular category $\mathcal{C}$ is
\emph{$S$-algebraically coherent} if, for every morphism $h \colon
E \to B$, the corresponding change-of-base functor of the
fibration $S\text{-cod}$ preserves jointly strongly epimorphic
pairs.
\end{defi}

Exactly as in the ``absolute'' case (i.e. when $S$ is the class of
all points), it is clear that every $S$-LACC category is $S$-FWACC
and that every finitely cocomplete $S$-LACC category is
$S$-algebraically coherent.

Our aim is to describe how these relative conditions allow to
distinguish ``poor'' $S$-protomodular categories, like general
J\'{o}nsson-Tarski varieties, from ``richer'' ones, closer to the
category of monoids. We start by showing that our key example, the
category of monoids, is $S$-LACC:

\begin{prop} \label{Mon is Schreier LACC}
The category $\mathsf{Mon}$ of monoids (and monoid homomorphisms)
is $S$-LACC, where $S$ is the class of Schreier split
epimorphisms.
\end{prop}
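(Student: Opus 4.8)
The plan is to fix a morphism $h\colon E\to B$ in $\mathsf{Mon}$ and construct, for each Schreier point over $B$, its "restriction along $h$" in such a way that the assignment is right adjoint to $h^*$. Since Schreier points over a fixed base correspond (functorially) to monoid actions, the strategy is to translate everything into the language of actions: a Schreier point over $B$ is the same as a monoid $X$ together with an action $\varphi\colon B\to\mathrm{End}(X)$, and $h^*$ sends such data to $X$ with the restricted action $\varphi\circ h\colon E\to\mathrm{End}(X)$ (one must first check that $h^*$ of a Schreier point is again Schreier — this follows from stability of $S$ under pullback, already granted in the $S$-protomodular setup). So what we need is, for a fixed $E$-action on $X$, a universal way of producing a $B$-action, i.e. a right adjoint to the functor "restrict the action along $h$" between the categories of $B$-acted monoids and $E$-acted monoids.

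The key construction I would use is a "cofree" or coinduced action. Given a monoid $X$ equipped with an $E$-action $\psi\colon E\to\mathrm{End}(X)$, consider the set $R(X)$ of all functions $t\colon B\to X$ satisfying a suitable equivariance/cocycle condition relative to $h$ and $\psi$ — concretely, functions compatible with the left action of $E$ on $B$ through $h$ together with $\psi$ — made into a monoid by pointwise operations, and equipped with a natural $B$-action by translation in the $B$-variable. One then forms the corresponding Schreier split epimorphism (semidirect product) $R(X)\rtimes B\to B$ over $B$. The main points to verify are: (i) that this semidirect product is again a \emph{Schreier} split epimorphism, so it genuinely lies in $SPt_B(\mathsf{Mon})$ — this should be automatic from the semidirect-product description, since every semidirect product associated to a monoid action is Schreier; (ii) that the assignment $X\mapsto R(X)$ extends to a functor; and (iii) the adjunction itself, namely that $\mathsf{Mon}$-maps of $B$-acted monoids $(Y,\varphi)\to (R(X),\text{translation})$ correspond naturally to $\mathsf{Mon}$-maps of $E$-acted monoids $(Y,\varphi\circ h)\to (X,\psi)$. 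Step (iii) is the standard "hom–tensor"–style manipulation: a map $Y\to R(X)$ is a map $Y\times B\to X$ with appropriate variance, and currying/evaluating at the unit of $B$ produces the desired $E$-equivariant map $Y\to X$, with the two directions mutually inverse by the unit laws $0+b=b+0=b$.

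The step I expect to be the main obstacle is getting the equivariance condition defining $R(X)$ exactly right so that both the monoid structure and the $B$-action are well defined \emph{and} the resulting point is Schreier. In the group case the analogous coinduced-module construction is classical and clean; in the monoid case one has to be careful because actions land in $\mathrm{End}(X)$ rather than $\mathrm{Aut}(X)$, so there is no inverse available, and the naive "all functions $B\to X$" will not carry a $B$-action by the usual translation formula — the cocycle/compatibility constraint relative to $h$ is what repairs this. I would first work out the formula in the special case $E=\{0\}$ (so $h$ is the unique map $0\to B$ and $h^*$ forgets the action entirely), where $R(X)$ should reduce to something like the monoid of functions $B\to X$ with a translation action, check the adjunction by hand there, and then see what compatibility with a general $\psi$ forces. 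Once the correct definition of $R(X)$ is in place, verifying functoriality and the naturality of the bijection in (iii) is routine bookkeeping with the explicit semidirect-product formulas $\varphi(b)(x)=q_f(s(b)+k(x))$ recalled above.
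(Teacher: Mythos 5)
Your proposal follows essentially the same route as the paper: identify Schreier points over a base with monoid actions (equivalently, functors from the one-object category $B$ to $\mathsf{Mon}$), and obtain the right adjoint of $h^*$ as the coinduced action on a monoid of functions, namely the paper's $L(B,M)=\{u\colon B\to M \mid e\cdot u(b)=u(h(e)+b)\ \text{for all } b\in B,\ e\in E\}$ with the translation $B$-action $(b_0\cdot u)(b)=u(b+b_0)$ and counit given by evaluation at $0_B$. The ``equivariance condition to be worked out'' that you flag as the main obstacle is exactly this formula, and the remaining verifications are the routine ones you describe, so the proposal is correct and matches the paper's argument.
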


\begin{proof}
We already observed that a Schreier split epimorphism $\smash[t]{\xymatrix{
A \ar@<-2pt>[r]_f & B \ar@<-2pt>[l]_s }}$ with kernel $X$
corresponds to an action of $B$ on $X$, i.e. to a monoid
homomorphism $\varphi \colon B \to \text{End}(X)$. Moreover, such
actions can also be described as functors $F \colon B \to
\mathsf{Mon}$, where the monoid $B$ is seen as a category with
only one object $*_B$: given $\varphi$ as above, we define $F$ by
putting $F(*_B) = X$ and, for any $b \in B$, $F(b) = \varphi(b)$.
For every monoid $B$, there is then an equivalence of categories
\[ SPt_B(\mathsf{Mon}) \cong \mathsf{Mon}^B. \]
Given a monoid homomorphism $h \colon E \to B$, we have to prove
that the change-of-base functor
\[ h^* \colon SPt_B(\mathsf{Mon}) \to SPt_E(\mathsf{Mon}) \] has a
right adjoint. From the remarks above it follows that $h^*$ is
naturally isomorphic to the functor
\[ \mathsf{Mon}^h \colon \mathsf{Mon}^B \to \mathsf{Mon}^E. \] The
category $\mathsf{Mon}$ is complete, hence the right adjoint $R_h$
of $\mathsf{Mon}^h$ can be constructed by means of the right Kan
extension. Let us give a concrete description of $R_h$.

Given $F \colon E \to \mathsf{Mon}$ with $F(*_E) = M$, we have
that $R_h(F) \colon B \to \mathsf{Mon}$ is defined by
\[ R_h(*_B) = L(B, M) = \{ u \colon B \to M \ | \ e \cdot u(b) =
u(h(e)+b), \ \text{for every } b \in B, e \in E \}, \] where the
maps $u$ are not required to be monoid homomorphisms and $e \cdot
u(b)$ denotes the action of $e \in E$ on $u(b) \in M$; the
definition of $R_h$ on morphisms is given by
\[ (R_h(b_0)(u))(b) = u(b+b_0). \]
This last equality describes the action of $B$ on $L(B, M)$. The
counit $\varepsilon$ of the adjunction $\mathsf{Mon}^h \dashv R_h$
has components $\varepsilon_F \colon L(B, M) \to M$ given by
evaluation of $u \in L(B, M)$ at the neutral element of $B$:
$\varepsilon_F(u) = u(0_B)$. Let us check the universality of this
counit. Given a functor $G \colon B \to \mathsf{Mon}$, with
$G(*_B) = S$, and a natural transformation $\beta \colon Gh \to
F$, there is a unique natural transformation $\gamma \colon G \to
R_h(F)$ such that $\varepsilon \gamma h = \beta$. The component of
$\gamma$ at the unique object $*_B$ of $B$ is the map
$\gamma_{*_B} \colon S \to L(B, M)$ defined by
\begin{equation}
  \gamma_{*_B}(s)(b) = \beta(b \cdot s), \ \text{for all } s \in
S, b \in B.
\tag*{\qEd}
\end{equation}
\def\popQED{}
\end{proof}

We observe that, with the notation of the previous proof, if $h
\colon E \to B$ is a surjective monoid homomorphism, and if we
choose a set-theoretical section $s \colon B \to E$, then the
description of $L(B, M)$ can be simplified. In fact, every $u \in
L(B, M)$ is completely determined by its value at $0_B$:
\[ u(b) = u(b +0_B) = u(hs(b) +0_B) = s(b) \cdot u(0_B). \] Hence
$L(B, M)$ is isomorphic to the following submonoid of $M$:
\[ \{ m \in M \ | \ e s(b) \cdot m = s(h(e)+b) \cdot m, \ \text{for every } b \in B, e \in E \}. \]

\medskip

Being $S$-LACC, the category of monoids is also $S$-FWACC and
$S$-algebraically coherent. The category $\mathsf{SRng}$ of
semirings is not $S$-LACC with respect to the class $S$ of
Schreier split epimorphisms: if $B$ is a ring, $B \neq 0$, then
every split epimorphism with codomain $B$ is a Schreier one
\cite[Proposition 6.1.6]{Schreier book}: $SPt_B(\mathsf{SRng}) =
Pt_B(\mathsf{SRng})$. Then Proposition 6.7 in \cite{Gray} implies
that the functor
\[ \text{Ker}_B \colon SPt_B(\mathsf{SRng}) \to \mathsf{SRng} =
SPt_0(\mathsf{SRng}), \] which is the change-of-base functor
determined by the unique morphism $0 \to B$, does not have a right
adjoint. However, $\mathsf{SRng}$ is $S$-FWACC. Actually, we can
prove something slightly stronger:

\begin{prop} \label{SRng is Schreier fwacc}
If $h \colon E \to B$ is a regular epimorphism (i.e. a surjective
homomorphism) in the category $\mathsf{SRng}$ of semirings (and
semiring homomorphisms), then the change-of-base functor
\[ h^* \colon SPt_B(\mathsf{SRng}) \to SPt_E(\mathsf{SRng}) \]
of the fibration of Schreier points has a right adjoint.
\end{prop}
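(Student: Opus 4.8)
The plan is to adapt the description of Schreier split epimorphisms in semirings in terms of actions, the way the proof of Proposition \ref{Mon is Schreier LACC} used functors $B \to \mathsf{Mon}$. In a Jónsson-Tarski variety, and in particular in $\mathsf{SRng}$, a Schreier split epimorphism $(A, B, f, s)$ with kernel $X$ is equivalent to an action of $B$ on $X$; for semirings this means a pair of maps expressing how the elements of $B$ act on $X$ by ``multiplication on the left and on the right'' and ``conjugation'' (i.e.\ the datum coming from the Schreier retraction $q_f$), compatible with the two operations. Concretely, $A$ is recovered as a semidirect product $X \rtimes B$ with underlying set $X \times B$, with addition $(x_1, b_1) + (x_2, b_2) = (x_1 + b_1 \cdot x_2, b_1 + b_2)$ and a suitably twisted multiplication. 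First I would fix, once and for all, this equivalence $SPt_B(\mathsf{SRng}) \simeq \mathsf{SRng}\text{-}\mathrm{Act}(B)$ and observe that under it the change-of-base functor $h^*$ corresponds to restricting an action of $B$ to an action of $E$ along $h$, exactly as $\mathsf{Mon}^h$ did.

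Next I would construct the candidate right adjoint $R_h$ directly on actions. Given an action of $E$ on a semiring $M$, put $R_h(M)$ to be the set $L(B,M)$ of those maps $u \colon B \to M$ (not semiring homomorphisms) satisfying the compatibility $e \cdot u(b) = u(h(e) \cdot b)$ type conditions forced by the $E$-action — but now with one clause for each of the two operations of $\mathsf{SRng}$, so $u$ must intertwine the $E$-action with the $B$-action with respect to both the additive and the multiplicative structure. One then has to (i) check that $L(B,M)$ is closed under the pointwise operations and hence is a sub-semiring of $M^B$ — here the surjectivity of $h$ is what makes the defining conditions well-behaved, since every element of $B$ is of the form $h(e)$; (ii) equip $L(B,M)$ with a $B$-action, via $(b_0 \cdot u)(b) = u(b \cdot b_0)$ together with the analogous additive twist, and verify it really is a semiring action; (iii) define the counit $\varepsilon_M \colon h^* R_h(M) \to M$ by evaluation at $0_B$, $\varepsilon_M(u) = u(0_B)$, and check it is a morphism of $E$-actions; and (iv) verify the universal property: for a $B$-action $N$ and a morphism $\beta \colon h^*N \to M$ of $E$-actions, the map $\gamma_N(n)(b) = \beta(b \cdot n)$ is the unique morphism $N \to R_h(M)$ of $B$-actions with $\varepsilon_M \circ h^*\gamma_N = \beta$. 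Formally this is the right Kan extension along the one-object-category picture, restricted to the sub-semirings $L(B,M)$; completeness of $\mathsf{SRng}$ guarantees everything converges set-theoretically, and the point is only to see that the Kan-extension formula stays inside the class of Schreier points.

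The main obstacle — and the reason surjectivity of $h$ is assumed rather than arbitrary $h$ as in $\mathsf{Mon}$ — is step (i)/(ii): the defining conditions cutting $L(B,M)$ out of $M^B$ involve the multiplicative part of the semiring action, and without surjectivity of $h$ the resulting subset need not be a sub-semiring, nor carry a well-defined $B$-action. When $h$ is surjective one can, after choosing a set-theoretic section $s \colon B \to E$, reparametrise exactly as in the remark following Proposition \ref{Mon is Schreier LACC}: every $u \in L(B,M)$ is determined by $u(0_B)$ via $u(b) = s(b) \cdot u(0_B)$, so $L(B,M)$ is isomorphic to an honest sub-semiring of $M$ cut out by finitely many equations, and the $B$-action and the verification of the adjunction identities become routine manipulations with the semidirect-product formulas. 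So the proof is: establish the action-theoretic description of Schreier points in $\mathsf{SRng}$; transport $h^*$ to restriction of actions; write down $R_h$, its unit/counit, and check the triangle identities using surjectivity of $h$ to keep the construction inside Schreier points.
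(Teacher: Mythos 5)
Your overall plan (replace Schreier points over $B$ by $B$-actions, view $h^*$ as restriction of actions along $h$, and build a concrete right adjoint using surjectivity) is the same as the paper's, but the construction you actually sketch is transplanted from the monoid case and does not survive the transplant; this is where the gap lies. For semirings an action of $B$ on $X$ is a purely \emph{multiplicative} pair of maps $(b,x)\mapsto b\cdot x$, $(x,b)\mapsto x\cdot b$ (the semidirect product has componentwise addition, not the twisted addition you wrote, and there is no ``conjugation'' datum), so a $B$-action is \emph{not} a functor into $\mathsf{SRng}$ and there is no Kan-extension formula to imitate. Consequently your $L(B,M)$ is never actually defined: ``conditions of type $e\cdot u(b)=u(h(e)\cdot b)$, one clause per operation'' is not a definition, and the natural candidates fail the properties you need. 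With the multiplicative clause, pointwise multiplication is not compatible, since $e\cdot\bigl(u(b)v(b)\bigr)=(e\cdot u(b))\,v(b)$, which is not $u(h(e)b)\,v(h(e)b)$; and evaluation at $0_B$ cannot be the counit: if, as you claim, $u(b)=s(b)\cdot u(0_B)$, then $u(0_B)=s(0_B)\cdot u(0_B)=0$ by the axiom $0\cdot x=0$, so your counit would be the zero map. The identity $u(b)=s(b)\cdot u(0_B)$ in the remark after Proposition \ref{Mon is Schreier LACC} is derived from the \emph{additive} intertwining condition $e\cdot u(b)=u(h(e)+b)$, which has no counterpart here precisely because semiring actions have no additive part.

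The argument that does work, and is the paper's, bypasses function spaces entirely: given an $E$-action on $X$, set $R_h(X)=\{x\in X \mid e_1\cdot x=e_2\cdot x \text{ and } x\cdot e_1=x\cdot e_2 \text{ whenever } h(e_1)=h(e_2)\}$, check it is a sub-semiring (closure under multiplication uses the axiom $e\cdot(x_1x_2)=(e\cdot x_1)x_2$), endow it with the $B$-action $b\cdot x=e\cdot x$, $x\cdot b=x\cdot e$ for any $e\in h^{-1}(b)$ --- well defined exactly because $h$ is surjective --- take the counit to be the inclusion $h^*R_h(X)\to X$, and observe that any equivariant $\beta\colon h^*N\to X$ lands in $R_h(X)$, giving the universal property. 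Your proposal never reaches this object, and the verifications you defer to ``routine manipulations'' after the (unjustified) reparametrisation are precisely the missing content of the proof.
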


\begin{proof}
Similarly to what happens for monoids, Schreier split epimorphisms
of semirings correspond to actions \cite{MartinsMontoliSobral mon
w op}. An action of a semiring $B$ on a semiring $X$ is a pair
$\varphi = (\varphi_l, \varphi_r)$ of functions
\[ \varphi_l \colon B \times X \to X, \qquad \varphi_r \colon X
\times B \to X, \] whose images are simply denoted by $b \cdot x$
and $x \cdot b$, respectively, such that the following conditions
are satisfied for all $b, b_1, b_2 \in B$ and all $x, x_1, x_2 \in
X$:
\begin{enumerate}
\item $0 \cdot x = x \cdot 0 = 0 \cdot b = b \cdot 0 = 0$;

\item $b \cdot (x_1 + x_2) = b \cdot x_1 + b \cdot x_2, \quad $
$(x_1 + x_2) \cdot b = x_1 \cdot b + x_2 \cdot b$;

\item $(b_1 + b_2) \cdot x = b_1 \cdot x + b_2 \cdot x, \quad $ $x
\cdot (b_1 + b_2) = x \cdot b_1 + x \cdot b_2$;

\item $b \cdot (x_1 x_2) = (b \cdot x_1) x_2, \quad $ $(x_1 x_2)
\cdot b = x_1 (x_2 \cdot b)$;

\item $(b_1 b_2) \cdot x = b_1 \cdot (b_2 \cdot x), \quad $ $x
\cdot (b_1 b_2) = (x \cdot b_1) \cdot b_2$;

\item $x_1 (b \cdot x_2) = (x_1 \cdot b) x_2, \quad $ $(b_1 \cdot
x) \cdot b_2 = b_1 \cdot (x \cdot b_2)$.
\end{enumerate}
\vspace{2mm}
Given a Schreier split epimorphism $\smash{\xymatrix{ X \ar@<-2pt>[r]_k &
A \ar@<-2pt>[r]_f \ar@<-2pt>@{.>}[l]_{q_f} & B \ar@<-2pt>[l]_s }}$
of semirings, the corresponding action is given by
\[ b \cdot x = q_f(s(b) k(x)), \qquad x \cdot b = q_f(k(x) s(b)); \]
conversely, given an action of $B$ on $X$, the corresponding
Schreier split epimorphism is obtained via a semidirect product
construction (see \cite{MartinsMontoliSobral mon w op} for more
details). If we denote by $B$-$Act$ the category whose objects are
pairs $(X, \varphi)$, with $X \in \mathsf{SRng}$ and $\varphi$ an
action of $B$ on $X$, and whose morphisms are equivariant
homomorphisms, we get an equivalence of categories
$SPt_B(\mathsf{SRng}) \cong B$-$Act$. Unlike the case of monoids,
a $B$-action can't be represented as a functor into
$\mathsf{SRng}$. Still, given a surjective morphism $h \colon E
\to B$ in $\mathsf{SRng}$, in order to prove that the
change-of-base functor
\[ h^* \colon SPt_B(\mathsf{SRng}) \to SPt_E(\mathsf{SRng}) \] has a
right adjoint, we can consider the equivalent functor
\[ h^* \colon B\text{-}Act \to E\text{-}Act \] which sends $(X, \varphi)$ to
$(X, \psi)$, where the action $\psi$ is defined by
\[ e \cdot x = h(e) \cdot x, \qquad x \cdot e = x \cdot h(e). \]
The right adjoint $R_h \colon E$-$Act \to B$-$Act$ of $h^*$ is
defined as follows. Given $(X, \psi) \in E$-$Act$, we define
\[ R_h(X) = \{ x \in X \ | \ e_1 \cdot x = e_2 \cdot x, \ x \cdot
e_1 = x \cdot e_2 \ \text{for all } e_1, e_2 \in E \ \text{such
that } h(e_1) = h(e_2) \}. \] It is immediate to see that $R_h(X)$
is a submonoid of $X$. The action $\varphi$ of $B$ on $R_h(X)$ is
given by
\[ b \cdot x = e \cdot x, \quad x \cdot b = x \cdot e \quad
\text{for all } e \in E \ \text{such that } h(e) = b. \] Of course
this is well-defined thanks to the definition of $R_h(X)$. Given a
morphism $g \colon X \to Y$ in $E$-$Act$, $R_h(g)$ is just its
restriction to $R_h(X)$: it takes values in $R_h(Y)$ because of
the equivariance of $g$. It is straightforward to check that $R_h$
is the right adjoint to $h^*$.
\end{proof}

\begin{cor}
The category $\mathsf{SRng}$ is $S$-FWACC, where $S$ is the class
of Schreier split epimorphisms.
\end{cor}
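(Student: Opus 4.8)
The plan is to deduce the Corollary directly from Proposition \ref{SRng is Schreier fwacc}, the only real point being to verify that the hypothesis of that proposition covers every case demanded by the definition of $S$-FWACC. Recall from Section \ref{S-protomodular categories} that $\mathsf{SRng}$, being a category of monoids with operations, is a J\'{o}nsson-Tarski variety and hence is $S$-protomodular with respect to the class $S$ of Schreier split epimorphisms; so Definition \ref{S-fwacc category} is applicable, and what has to be shown is that for \emph{every} split epimorphism $h \colon E \to B$ in $\mathsf{SRng}$ the change-of-base functor $h^* \colon SPt_B(\mathsf{SRng}) \to SPt_E(\mathsf{SRng})$ of the fibration of Schreier points admits a right adjoint.

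First I would invoke the elementary fact that a split epimorphism is always a regular epimorphism: if $ht = 1_B$, then $h$ is the coequalizer of its own kernel pair $(r_1, r_2)$, a cocone $g$ with $g r_1 = g r_2$ factoring uniquely through $h$ by means of $g t$. In the variety $\mathsf{SRng}$ the regular epimorphisms are exactly the surjective semiring homomorphisms, so any split epimorphism $h \colon E \to B$ is in particular a surjective homomorphism.

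It then suffices to apply Proposition \ref{SRng is Schreier fwacc} to this $h$, which produces the required right adjoint $R_h \colon SPt_E(\mathsf{SRng}) \to SPt_B(\mathsf{SRng})$. Since this works for an arbitrary split epimorphism $h$, the category $\mathsf{SRng}$ is $S$-FWACC by Definition \ref{S-fwacc category}. There is essentially no obstacle here: the entire content lies in Proposition \ref{SRng is Schreier fwacc}, which is in fact strictly stronger than what is needed, as it supplies the right adjoint along \emph{all} surjections and not merely along split ones. It is worth stressing, however, that the Corollary cannot be obtained the ``cheap'' way through the implication $S$-LACC $\Rightarrow$ $S$-FWACC, precisely because $\mathsf{SRng}$ was shown above to fail $S$-LACC; so the surjective-epimorphism version of $S$-FWACC proved in Proposition \ref{SRng is Schreier fwacc} is genuinely required.
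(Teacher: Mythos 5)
Your proposal is correct and matches the paper's (implicit) reasoning: the corollary is stated as an immediate consequence of Proposition \ref{SRng is Schreier fwacc}, and your only added step --- that every split epimorphism in $\mathsf{SRng}$ is surjective, hence a regular epimorphism covered by the proposition --- is exactly the observation the paper leaves tacit.
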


The category of semirings is not only $S$-FWACC, but also
$S$-algebraically coherent:

\begin{prop} \label{SRng is Schreier algebraically coherent}
The category $\mathsf{SRng}$ is $S$-algebraically coherent, where
$S$ is the class of Schreier split epimorphisms.
\end{prop}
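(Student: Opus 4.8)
The plan is to work through the equivalence $SPt_B(\mathsf{SRng}) \cong B\text{-}Act$ established in the proof of Proposition \ref{SRng is Schreier fwacc}, and to verify the defining condition of $S$-algebraic coherence directly in terms of actions. So, fix a semiring homomorphism $h \colon E \to B$ and a jointly strongly epimorphic pair of morphisms $(\alpha, \beta)$ in $SPt_B(\mathsf{SRng})$ with common codomain $(A, B, f, s)$; I must show that $(h^*\alpha, h^*\beta)$ is jointly strongly epimorphic in $SPt_E(\mathsf{SRng})$. The first step is to make the test explicit: a pair in $SPt_E(\mathsf{SRng})$ is jointly strongly epimorphic precisely when, after applying the equivalence with $E\text{-}Act$ and then the (kernel) forgetful functor to $\mathsf{SRng}$, the images are not contained in any proper subobject that is simultaneously a subsemiring and an $E$-subaction. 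Because the fibration $S$-cod has the property that a point in $S$ is determined by its kernel together with the action (the ``strong point'' condition, part (1) of Definition \ref{S-protomodular category}), a subobject in $SPt_E(\mathsf{SRng})$ corresponds exactly to an $E$-equivariant subsemiring of the kernel, so the whole question reduces to a statement about semirings with actions.

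The second, and central, step is the reformulation in the fibre over $B$: I would first observe that joint strong epimorphicity in $SPt_B(\mathsf{SRng})$, applied to $(\alpha, \beta)$ with kernels $X_\alpha, X_\beta \hookrightarrow X = \mathrm{Ker}(f)$, says exactly that the smallest subsemiring of $X$ that contains the images of $X_\alpha$ and $X_\beta$ and is closed under the $B$-action is all of $X$. The change-of-base functor $h^*$ does nothing to the underlying kernel semiring $X$ — it only restricts the action along $h$, so $h^*X$ has the same underlying semiring but with $e \cdot x := h(e)\cdot x$. Thus the claim becomes: if $X$ is generated, as a semiring-with-$B$-action, by the two subsets $X_\alpha$ and $X_\beta$, then it is generated by the same two subsets as a semiring-with-$E$-action (via $h$). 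The key computational fact, which is really the heart of the argument, is that the $B$-action orbit of any element is contained in the $E$-action orbit, because every $b \in B$ that arises in a product $b \cdot x$ can, through the explicit action axioms (1)--(6) listed in the proof of Proposition \ref{SRng is Schreier fwacc}, be reached by combining the bilinearity over sums and the multiplicativity $(b_1 b_2)\cdot x = b_1 \cdot(b_2 \cdot x)$; one then checks that the submonoid of $B$ consisting of elements whose action on $X$ is realized by elements of $E$ is actually a subsemiring containing a generating set of $B$ — but here $h$ being an arbitrary homomorphism (not surjective) forces the subtlety.

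In fact I expect the main obstacle to be precisely the non-surjectivity of $h$: unlike in Proposition \ref{SRng is Schreier fwacc}, where surjectivity is essential for the right adjoint to exist, algebraic coherence is asserted for \emph{every} morphism $h$. The resolution should be that joint strong epimorphicity is a much more robust condition than the existence of a right adjoint: to show that an $E$-equivariant subsemiring $X'$ of $X$ through which the images of $X_\alpha, X_\beta$ factor must be all of $X$, I only need $X'$ to be closed under the $B$-action as well, and this follows because the $B$-action on elements of the subsemiring generated by $X_\alpha, X_\beta$ over $E$ is forced — any $b \cdot x$ with $x$ a ``word'' in $X_\alpha, X_\beta$ under sums, semiring products, and $E$-scalars can be rewritten, using axioms (4) and (6) (which let scalars slide across semiring products) and axiom (2)--(3) (bilinearity), so that the outermost $B$-scalar lands on a generator, and on a generator the $B$-action and $E$-action agree up to the submodule structure we already control. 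So the plan is: (i) set up the test for joint strong epimorphicity in $SPt_E$ via equivariant subsemirings of kernels; (ii) reduce to showing every $E$-equivariant subsemiring of $X$ containing $X_\alpha \cup X_\beta$ is automatically $B$-equivariant; (iii) prove this closure statement by a structural induction over semiring-action words, invoking axioms (1)--(6); (iv) conclude that such a subsemiring equals $X$ by joint strong epimorphicity of $(\alpha,\beta)$ over $B$. Step (iii) is where the real work lies, and it is also where one must be careful that semirings genuinely behave like the additive-monoid-with-bilinear-multiplication structure that makes coherence go through, in contrast to the general ``monoids with operations'' case where it fails.
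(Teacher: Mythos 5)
Your overall route is viable and genuinely different from the paper's: the paper first reduces, using conservativity of the change-of-base functors together with Lemmas 3.9 and 3.11 of \cite{CigoliGrayVdL}, to showing that each kernel functor $\mathrm{Ker}_B \colon SPt_B(\mathsf{SRng}) \to \mathsf{SRng}$ preserves jointly strongly epimorphic pairs, and then argues by explicit computation on the \emph{total} objects, writing $a = h + s'p'(a)$, $c = l + s''p''(c)$ via the Schreier retractions and showing that the ``pure'' summands such as $sp'(a)\,sp''(c)$ vanish. You instead stay in the fibres, transported along $SPt_B(\mathsf{SRng}) \cong B\text{-}Act$, and want to show that the subalgebra of the kernel $X$ generated by the images of the two kernels is stable under the $B$-action. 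That plan can be made to work, and in fact, once completed, it proves the cleaner statement that $X$ is generated by the two images as a \emph{plain} subsemiring (the $E=0$ case), from which the case of an arbitrary $h$ is immediate; your worry about non-surjectivity of $h$ is a red herring.

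However, step (iii), which you yourself identify as the heart of the matter, has a genuine gap: the fact that actually closes the induction is never stated, and what you offer in its place is wrong. The needed ingredient is that the images $X_\alpha, X_\beta \subseteq X$ are themselves stable under the $B$-action, because $\alpha$ and $\beta$ are morphisms of points over $B$, so their restrictions to kernels are $B$-equivariant (equivalently, morphisms of Schreier points commute with the Schreier retractions: $q_p f = f q_{p'}$). Granting this, axioms (2)--(5) (with $e \cdot x = h(e)\cdot x$ to absorb $E$-scalars into $B$-scalars) let you slide any scalar onto the leftmost or rightmost generator of a word, where it stays inside $X_\alpha$ or $X_\beta$; hence the subsemiring generated by $X_\alpha \cup X_\beta$ is a $B$-equivariant subobject through which both maps factor, and joint strong epimorphicity over $B$ forces it to be all of $X$. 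By contrast, your stated justifications do not hold: ``the $B$-action orbit of any element is contained in the $E$-action orbit'' is false (take $E = 0$, which is precisely the hardest case, where the $E$-action is trivial); the digression about the subsemiring of elements of $B$ ``whose action on $X$ is realized by elements of $E$'' leads nowhere; and ``on a generator the $B$-action and $E$-action agree up to the submodule structure we already control'' is not the relevant statement --- what is relevant is $B$-stability of the images, i.e., equivariance of $\alpha$ and $\beta$. Until that lemma is isolated and proved, the structural induction in (iii) does not close.
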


\begin{proof}
Since all the change-of-base functors of the fibration of Schreier
points in $\mathsf{SRng}$ are conservative, as we recalled in
Section \ref{S-protomodular categories}, and they obviously
preserve monomorphisms, we can use Lemmas 3.9 and 3.11 in
\cite{CigoliGrayVdL} to conclude that, in order to prove that
$\mathsf{SRng}$ is $S$-algebraically coherent, it suffices to show
that, for every semiring $B$, the kernel functor
\[ \text{Ker}_B \colon SPt_B(\mathsf{SRng}) \to \mathsf{SRng} \]
preserves jointly strongly epimorphic pairs. Accordingly, consider
the following diagram, where $f$ and $g$ are morphisms in
$SPt_B(\mathsf{SRng})$:
\[ \xymatrix{ H \ar[r]^{f_{|_H}} \ar@{ |>->}[d] & K \ar@{ |>->}[d]
& L \ar[l]_{g_{|_L}} \ar@{ |>->}[d] \\
A \ar[r]^f \ar@<-2pt>[d]_{p'} & D \ar@<-2pt>[d]_p & C \ar[l]_g
\ar@<-2pt>[d]_{p''} \\
B \ar@{=}[r] \ar@<-2pt>[u]_{s'} & B \ar@{=}[r] \ar@<-2pt>[u]_s &
B, \ar@<-2pt>[u]_{s''} } \] and suppose that $f$ and $g$ are
jointly strongly epimorphic (in $SPt_B(\mathsf{SRng})$); we have
to prove that their restrictions to $H$ and $L$, respectively, are
jointly strongly epimorphic in $\mathsf{SRng}$. It is not
difficult to see that $f$ and $g$ are jointly strongly epimorphic
in $SPt_B(\mathsf{SRng})$ if and only if they have the same
property in $\mathsf{SRng}$. This happens if and only if $D$, as a
semiring, is generated by the images $f(A)$ and $g(C)$. This means
that every $d \in D$ may be written as sums of elements of the
following 4 forms:
\[ f(a_1) g(c_1) f(a_2) g(c_2) \ldots f(a_n) g(c_n), \qquad f(a_1) g(c_1) f(a_2) g(c_2)
\ldots f(a_n), \]
\[ g(c_1) f(a_2) g(c_2) \ldots f(a_n) g(c_n),
\qquad g(c_1) f(a_2) g(c_2) \ldots f(a_n). \] In particular, every
element $k \in K$ has this property. We have to show that $k$ can
be written as a sum of products as above, but only using elements
of $f(H)$ and $g(L)$.

We start by considering the simplest case: suppose that $k = f(a)
g(c)$ for some $a \in A$, $c \in C$. Since $(p', s')$ and $(p'',
s'')$ are Schreier split epimorphisms, there are (unique) $h \in
H$, $l \in L$ such that $a = h + s'p'(a)$ and $c = l + s''p''(c)$.
Then
\begin{align} \label{formula 1}
k &= f(a)g(c) = f(h + s'p'(a)) g(l + s''p''(c)) =
  \\
&= f(h)g(l) + f(h) gs''p''(c) + fs'p'(a) g(l) + fs'p'(a)
gs''p''(c) = \tag*{}
\\
& = f(h)g(l) + f(h) sp''(c) + sp'(a) g(l) + sp'(a)
  sp''(c). \tag*{}
\end{align}
  We have that
\[ f(h) sp''(c) = f(h) fs'p''(c) = f(h \, s'p''(c)) \] and
\[ p'(h \, s'p''(c)) = pf(h \, s'p''(c)) = pf(h) psp''(c) = 0 p''(c) = 0, \]
hence $h \, s'p''(c) \in H$. Similarly, we show that $sp'(a) g(l)
= g(s''p'(a) \, l)$, with $s''p'(a) \, l \in L$. From
\eqref{formula 1} we get
\begin{align*}
0 = p(k) &= p(f(h)g(l) + f(h) sp''(c) + sp'(a) g(l) + sp'(a) sp''(c)) = \\
&= p(sp'(a) sp''(c)) = ps(p'(a) p''(c));
\end{align*}
since $ps = 1_B$, we
get that $p'(a) p''(c) = 0$ and hence $s(p'(a) p''(c)) = sp'(a)
sp''(c) = 0$. This means that $k$ can be written as a sum of
products of elements in $f(H)$ and $g(L)$. The case in which $k =
g(c)f(a)$, for some $c \in C$ and $a \in A$, is similar.

If $k = f(a_1) g(c) f(a_2)$ for some $a_1, a_2 \in A$ and $c \in
C$, then
\begin{align*}
k &= f(h_1 + s'p'(a_1)) g(l + s''p''(c)) f(h_2 + s'p'(a_2)) = \\
&= (f(h_1) + sp'(a_1)) (g(l) + sp''(c)) (f(h_2) + sp'(a_2))
\end{align*}
for suitable $h_1, h_2 \in H$, $l \in L$. Making the calculations in
the last expression using the distributivity law, we get a sum in
which every summand, except $sp'(a_1) sp''(c) sp'(a_2)$, contains
an element of $f(H)$ or of $g(L)$, and then all these summands
belong either to $f(H)$, to $g(L)$ or are made of products of
elements of $f(H)$ and $g(L)$; let's consider, for instance, one
of them:
\[ f(h_1)sp''(c)sp'(a_2) = f(h_1 \, s'p''(c) s'p'(a_2)), \] and $h_1 \, s'p''(c)
s'p'(a_2)\in H$, since $p'(h_1 \, s'p''(c) s'p'(a_2)) = 0$. So, it
only remains to consider the summand $sp'(a_1) sp''(c) sp'(a_2)$;
but, for the same reasons as in the case $k = f(a)g(c)$, this is
equal to $0$. So, if $k = f(a_1) g(c) f(a_2)$, then $k$ is a sum
of products of elements of $f(H)$ and $g(L)$. All the other cases
are dealt analogously.
\end{proof}

It is not true that every category of monoids with operations is
$S$-algebraically coherent w.r.t. the class $S$ of Schreier split
epimorphisms. This is not the case even for groups with
operations, as shown in Examples $4.10$ in \cite{CigoliGrayVdL}.
Hence the additional conditions we considered in this section gave
us the hierarchy among $S$-protomodular categories we were looking
for. When $S$ is the class of Schreier split epimorphisms, this
hierarchy is summarized by the following table:

\bigskip

\begin{center}
  \def\arraystretch{1.2}
\begin{tabular}{cc}
\hline 
    \sffamily Property & \sffamily True in \\
    \hline \hline
$S$-LACC & $\mathsf{Mon}$ \\
$S$-algebraically coherent & $\mathsf{Mon}$, $\mathsf{SRng}$ \\
$S$-FWACC & $\mathsf{Mon}$, $\mathsf{SRng}$ \\
$S$-``Smith is Huq'' & categories of monoids with operations \\
$S$-protomodularity & J\'{o}nsson-Tarski varieties  \\
    \hline
\end{tabular}
\end{center}

\section*{Acknowledgements}

This work was partially supported by the Programma per Giovani
Ricercatori ``Rita Levi Montalcini'', funded by the Italian
government through MIUR.

This work was partially supported by the Centre for Mathematics of
the University of Coimbra - UID/MAT/00324/2013, and by the
project CompositeSteering (18024), by ESTG and CDRSP from the
Polytechnic of Leiria - UID/Multi/04044/2013. Both projects
UID/MAT/00324/2013 and UID/Multi/04044/2013 are funded by the
Portuguese Government through FCT/MCTES and co-funded by the
European Regional Development Fund through the Partnership
Agreement PT2020.

\end{document}